\tikzset{
        ->, 
        >=stealth,
        node distance=3cm,
        every state/.style={thick, fill=gray!10}, 
        initial text=$ $, 
        }
\pgfplotsset{compat = newest}
\newcounter{exercise}
\newcommand{\DL}{{\rm DL}}
\newcommand{\Isom}{{\rm Isom}}
\newcommand{\Aut}{{\rm Aut}}
\newcommand{\set}[1]{\left\{#1\right\}}
\newcommand{\F}{\mathbb{F}}
\newcommand{\bbH}{\mathbb{H}}
\newcommand{\Z}{\mathbb{Z}}
\newcommand{\fee}{\varphi}
\title{Structure of Cross-wired Lamplighter Groups}
\date{\today}
\author{Benjamin Jeffers}
\newtheorem{theorem}{Theorem}
\newtheorem{prop}[theorem]{Proposition}
\newtheorem{lemma}[theorem]{Lemma}
\theoremstyle{definition}
\newtheorem{definition}[theorem]{Definition}
\newtheorem*{remark}{Remark}
\begin{document}

\maketitle

\abstract{We answer a question posed in \cite{CWL}, proving that for a closed cocompact subgroup $\Gamma$ of $\Isom(\DL(n, n))$ not contained in $\Isom^+(\DL(n, n))$, the sequence $1 \to H \to \Gamma \to D_\infty \to 1$ splits, where $H$ is the unique open normal subgroup such that $\Gamma/H \cong D_\infty$.}


\section{Introduction}
Cross-wired lamplighter groups were introduced in \cite{CWL} as a generalization of lamplighter groups. Traditional lamplighters are a heavily studied object, it is defined as the restricted wreath product $\Z_2 \wr \Z = \left( \bigoplus_{\Z} \Z/2\Z \right) \rtimes \Z$ where the action of $\Z$ on $\bigoplus_{\Z} \Z/2\Z$ is given by shifting the coordinates in the direct sum. Geometrically, the lamplighter group can be thought of as an infinite street with an infinite number of lamps, and a person to light the lamps, called the lamplighter. The lamplighter can move left and right, turning the lamps on or off, while only a finite number of lamps are on. An element in $\Z_2 \wr \Z$ corresponds to the positions of the on lamps, and the integer $n \in \Z$ is the position of the lamplighter. More generally, for any finite group $F$, the lamplighter group of $F$ is the wreath product $F \wr \Z = \left(\bigoplus_\Z F \right)\rtimes \Z$. This group is step-2 solvable and it has exponential growth. Grigorchuk and Zuk showed that $\Z/2\Z \wr \Z$ can be constructed as the automata groupof a 2-state automaton. Yang further to show that many automata groups arising from the Cayley machines defined by Steinberg and Silva, \cite{Steinberg-Silva}, are also cross-wired lamplighters, \cite{Yang1}

\par For one particular choice of generators the Cayley graph of a lamplighter group, is a Diestel-Leader graph \cite{DL}. For two positive integers, $m, n \geq 2$, the vertex set of a Diestel-Leader graph is defined as \[
DL(m, n) = \set{(x, y) \in T_m \times T_n \mid b(x) + b'(y) = 0}
\] where $T_m$ and $T_n$ are the regular infinite tree of degrees $m$ and $n$ respectively, and $b, b'$ are a Busemann functions on $T_m$ and $T_n$ respectively. Two vertices $(x, y)$, $(x', y')$ are connected by an edge if $x$ and $x'$ are connected by an edge in $T_m$ and $y$ and $y'$ are connected by an edge in $T_n$. Eskin, Fisher, and Whyte proved that a finitely generated group is quasi-isometric to a lamplighter group $F \wr \Z$ if and only if it acts properly and cocompactly by isometries on a Diestel-Leader graph $DL(n, n)$ \cite{EFW1}. Moreover, they showed that if $m \neq n$, then $DL(m, n)$ is not quasi-isometric to the Cayely graph of any finitely generated group, proving a conjecture of Diestel and Leader, \cite{EFW2}. 

In \cite{CWL}, Cornulier, Fisher, and Kashyap study the quasi-isometric rigidity of lamplighter groups. They do this by studying the cocompact lattices in the isometry group of a Diestel-Leader graph, $\Isom(\DL(n, n))$, and closed, cocompact subgroups of $\Isom(\DL(m, n))$ when $m \neq n$. These lattices turn out to not be lamplighter groups in general, and instead are called \emph{cross-wired lamplighters}. The name is due to the fact that we can view them as an infinite street with an infinite number of lamps and a lamplighter. The difference is that when the lamplighter changes the state of one lamp, it might change the state of other lamps as well. 

\par In this paper we answer question 1.4(iii) posed in \cite{CWL}. They ask, if given $\Gamma$, a closed cocompact subgroup of $\Isom(\DL(n, n))$ not contained in $\Isom^+(\DL(n, n))$ , does the extension \[
1 \to H \to \Gamma \to D_{\infty} \to 1
\] necessarily split? Here $H$ is the unique open normal subgroup such that $\Gamma/H \cong D_\infty$, and $D_{\infty}$ is the infinite dihedral group. We prove that the answer is yes by giving an isomorphism between the groups $(U \rtimes_\phi \Z) \rtimes_\psi Z/2\Z$ and $U \rtimes_\phi (\Z \rtimes_\psi \Z/2\Z)$. At the end we make a couple of comments on question 1.4(i) posed in \cite{CWL}. They ask if there exists a cross-wired lamplighter group that is not virtually symmetric. Where a cross-wired lamplighter $\Gamma$ is symmetric if it admits an automorphism $\alpha$ such that $\pi \circ \alpha = -\pi$ where $\pi$ is the projection from $\Gamma$ to $\Z$. A cross-wired lamplighter is virtually symmetric if it has a finite index symmetric subgroup. While we do not prove the statement either way we demonstrate that the example given in \cite{CWL}  
of $\Gamma = \mathbb H(\F_q[X, X^{-1}]$ is symmetric, where $\Gamma$ is the Heisenberg group of upper triangular matrices over $\F_q[X, X^{-1}]$. 

\par
\vspace{.5cm}
\noindent {\bf Acknowledgments} I would like to thank Nata\v sa Macura for the suggestion of the problem and many helpful discussions and Ryan Daileda for many fruitful conversations.


\section{Preliminaries}

In this section we present a general theorem that will be of use to us, as well as construct the Diestel-Leader graph, and describe its isometry group, and then we end with a brief discussion of cross-wired lamplighers including the main structure theorem of \cite{CWL}. Throughout we denote the regular infinite tree of degree $n$ by $T_n$. 

We will make use of the following theorem characterizing splittings of short exact sequences of groups to prove our main theorem. 

\begin{definition}\label{split-lemma}
We say that a sequence of groups $1 \to H \xrightarrow{\alpha} G \xrightarrow{\beta} K \to 1$ \emph{splits} if there is a homomorphism $\varphi \colon K \to \Aut(H)$ and an 
    \[ \begin{tikzcd}
1 \arrow{r} & H \arrow{d}{\text{id}} \arrow{r}{\alpha} & G \arrow{d}{\theta} \arrow{r}{\beta} & K \arrow{d}{\text{id}} \arrow{r}& 1 \\%
1 \arrow{r} & H \arrow{r} & H \rtimes_{\fee} K \arrow{r} & K \arrow{r} & 1
\end{tikzcd}
\] commutes. 
\end{definition}

\begin{definition}
The infinite dihedral group, denoted $D_\infty$ is given by the presentation: 
\[
D_{\infty} \cong \Z \rtimes \Z/2\Z = \left< r, s \mid s^2 = 1, srs = r^{-1} \right>
\]
\end{definition}

\begin{remark}
The group $D_\infty$ can be thought of as the isometries of $\Z$. It is generated by a shift to the right by one, and a flip around the origin.
\end{remark}

\vspace{.5cm}

Next, we construct the Diestel-Leader graph as in \cite{DL}, except we only consider the case when $m = n$. Given a $n$-regular infinite tree, fix a preferred end $-\infty$. For any two vertices we define their \textit{confluent} $\widehat{xy}$ to be the point where the infinite ray from $x$ to $-\infty$ meets the infinite ray from $y$ to $-\infty$. Now, fix a vertex $o$, this will be the origin. Define the \textit{Busemann function} or \textit{height function} to be $b \colon V(T_n) \to \Z$ given by \[
b(x) = d(x, \widehat{xo}) - d(o, \widehat{xo}).
\] This will partition the vertices of $T_n$ into sets which we call the \textit{level sets} $H_k$. An example of the level sets with respect to an origin and $-\infty$ in $T_2$ can be seen in figure \ref{fig:tree}. To define the Diestel-Leader graph, we take two copies of the tree $T_n$ along with a Busemann functions, $b$ and let \[
V(\DL(n, n)) = \set{(x, y) \in V(T_n \times T_n) \mid b(x) + b(y) = 0}. 
\] Two vertices $(a, b), (c, d)$ are connected in $\DL(n, n)$ if and only if $a$ is adjacent to $c$ in $T_n$ and $b$ is adjacent to $d$ in the second copy of $T_n$.

\begin{remark}
The Diestel-Leader graph is often depicted with the two trees side by side, with one upside down, so that the level sets add up to zero, as can be seen in figure \ref{fig:two-trees}. 
\end{remark}

\begin{figure}[h!]
\centering
\includegraphics[width=12cm]{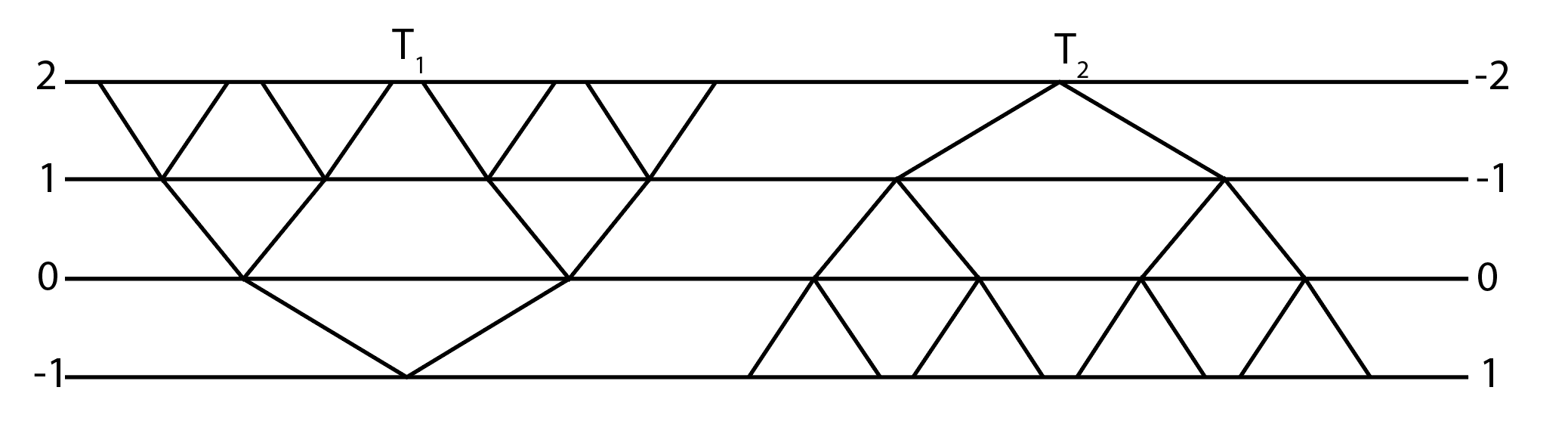}
\caption{The trees $T_1$ and $T_2$ side by side with the height functions of the vertices.}
\label{fig:two-trees}
\end{figure}

\begin{remark}
The resulting tree and level sets is independent of the choices of $-\infty$ and $o$ since a a tree with different choice of origin and boundary point can be interchanged by an isometry
\end{remark}

\begin{figure}[h!]
    \centering
    \includegraphics[width=17cm]{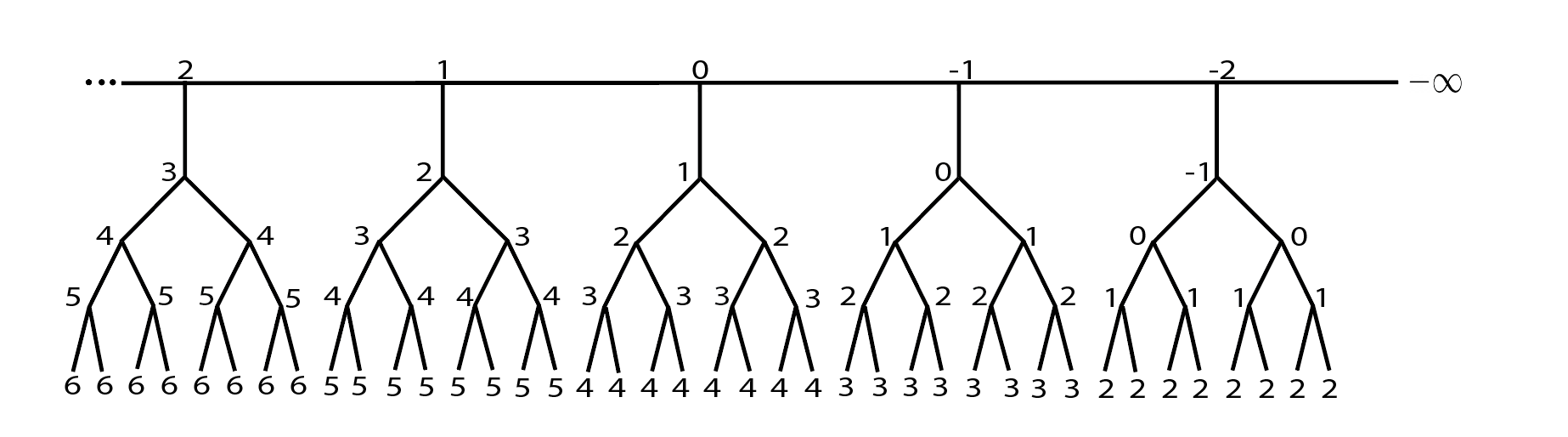}
    \caption{Level sets of $T_2$}
    \label{fig:tree}
\end{figure}

Now we construct the group of isometries of $\DL(n, n)$. See  \cite{WoessDL} for more details. Let $U_0$ denote the isometries of $T_n$ which fix $-\infty$ and preserve the level sets of the Busemann function $b$. We can explicitly describe this group. Take a vertex $v$, in a level set $H_k$, that is on the main line with the origin $o$ as in figure \ref{fig:tree}. The children of $v$ will be the adjacent vertices in level set $H_{k+1}$. An automorphism that fixes $-\infty$ and preserves the level set will be a rooted tree automorphisms with $v$ as the root. We can compose also compose these automorphisms with other rooted tree automorphisms taking different vertices on the main line as a root. These compositions will also preserve $-\infty$ and the level sets. In fact, these are all the possible automorphisms in $U_0$ as is shown in the next lemma. 

\begin{lemma}
The only isometries in $U_0$ are those that are a composition of rooted tree automorphisms to each branch, as well as the isometries that swap the children of a vertex on the main branch.  
\end{lemma}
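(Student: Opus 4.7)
The plan is to show that every $\phi \in U_0$ decomposes as a composition of swap isometries at main-branch vertices and isometries that act as rooted-tree automorphisms on the off-main branches. Label the main branch as $\{v_k\}_{k \in \Z}$ so that $v_0 = o$ and $b(v_k) = k$.

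First I would observe that for any $\phi \in U_0$, both the ray from $o$ to $-\infty$ and the ray from $\phi(o)$ to $-\infty$ share the end $-\infty$, so they must merge at some vertex $v_{-k_0}$ with $k_0 \geq 0$. Consequently $\phi(v_{-k}) = v_{-k}$ for every $k \geq k_0$.

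Next I would construct, inductively for $k = -k_0, -k_0+1, -k_0+2, \ldots$, a swap $\sigma_{k+1}$ at the main-branch vertex $v_k$ defined to carry $\tau_k(v_{k+1})$ back to $v_{k+1}$, where $\tau_k$ denotes the running composition of the swaps so far applied to $\phi$. Because a swap at $v_k$ is supported in the subtree rooted at $v_k$, it fixes each of the already-corrected vertices $v_{-k_0}, \ldots, v_k$. The resulting infinite composition $\sigma$ is well-defined as a pointwise limit: for any vertex $u$ at level $N$, a subsequent swap $\sigma_{k+1}$ at $v_k$ with $k \geq N$ fixes $\tau_k(u)$, since either $k > N$ and the subtree rooted at $v_k$ contains no level-$N$ vertices, or $k = N$ and $v_N$ is the unique level-$N$ vertex in that subtree and is fixed as the root of the swap. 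Hence $\tau_k(u)$ stabilizes once $k \geq N$, and $\sigma \circ \phi$ is a genuine isometry of $T_n$ that fixes the entire main branch pointwise.

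Finally I would argue that any $\tau \in U_0$ fixing the main branch pointwise is precisely a composition of rooted-tree automorphisms on the off-main branches. Such a $\tau$ fixes each $v_k$ and so permutes its $n-1$ children; since $v_{k+1}$ is also fixed, $\tau$ acts by a permutation of the $n-2$ off-main children of $v_k$ combined with rooted-tree automorphisms on the corresponding subtrees, and these local data at different $v_k$'s are independent. Combining the two steps yields $\phi = \sigma^{-1} \circ (\sigma \circ \phi)$, the desired decomposition, where $\sigma^{-1}$ is a composition of swaps at main-branch vertices and $\sigma \circ \phi$ is a composition of rooted-tree automorphisms of branches. The main technical obstacle will be justifying that the infinite composition of swaps defining $\sigma$ converges to a bona fide isometry of $T_n$ lying in $U_0$; this rests on the support and level argument described above, essentially viewing $U_0$ as an inverse limit of its actions on finite truncations of $T_n$.
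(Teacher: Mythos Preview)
Your argument is correct and takes a genuinely different route from the paper. The paper's proof proceeds by exclusion via the elliptic/hyperbolic dichotomy for tree isometries: it notes that a hyperbolic isometry (a translation along an axis) cannot preserve the level sets, and that an elliptic isometry which ``rotates around a vertex'' without fixing the ray to $-\infty$ is already excluded by the definition of $U_0$; the conclusion is then that only the described isometries remain. Your proof, by contrast, is constructive: you first locate a tail of the main line that is pointwise fixed by $\phi$, then inductively peel off swaps at successive main-line vertices to produce $\sigma$ with $\sigma\circ\phi$ fixing the entire main line, and finally read off the rooted-branch automorphisms from what remains.

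What each approach buys: the paper's route is shorter, leaning on a standard structural fact about tree automorphisms, but it does not actually exhibit the claimed decomposition and leaves implicit why every elliptic element of $U_0$ factors in the stated way. Your route supplies precisely that decomposition; the cost is having to justify that the infinite product of swaps converges to a bona fide element of $U_0$, which you handle cleanly with the level/support argument (each $\sigma_{k+1}$ is supported on the subtree rooted at $v_k$, hence acts trivially on every vertex of level at most $k$, so on any fixed vertex the partial products are eventually constant). One small point worth making explicit in a final write-up is that the same support argument shows $\sigma$ itself, and not just $\sigma\circ\phi$, is a well-defined isometry fixing $-\infty$ and the level sets, so that $\sigma^{-1}$ really is a composition of swaps in $U_0$.
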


\begin{proof}
It is well known that the only other types of infinite tree isometries are those that push everything along one line in the tree, and those that rotate around a vertex. If we rotate around a specific vertex, then this will not preserve $-\infty$, thus they are not in $U_0$. 

Let $A$ be an axis in the tree, and let $x$ be a vertex on $A$ such that $b(x) = k$. Then when we push along $A$, the vertex $x$ must end up in the level set $H_k$, however, looking at the child of $x$ that also lies in $A$, which we will call $y$, we see that this vertex must end up as a parent of $x$ in the final position, meaning $b(y) = k-1$, thus this isometry does not preserve the level sets. 
\end{proof}

Then the group $U = U_0 \times U_0$ is contained in $\Isom(\DL(n, n))$. Let $\varphi$ be the hyperbolic isometry of $T_n$ that has translation length 1, with $-\infty$ as the repelling end. This means that $\varphi$ has no fixed points, and we can essentially think of it as moving branch 0 to branch 1, branch 1 to branch 2 and so forth as seen in figure \ref{fig:action}\begin{figure}[h!].
    \centering
    \includegraphics[width=15cm]{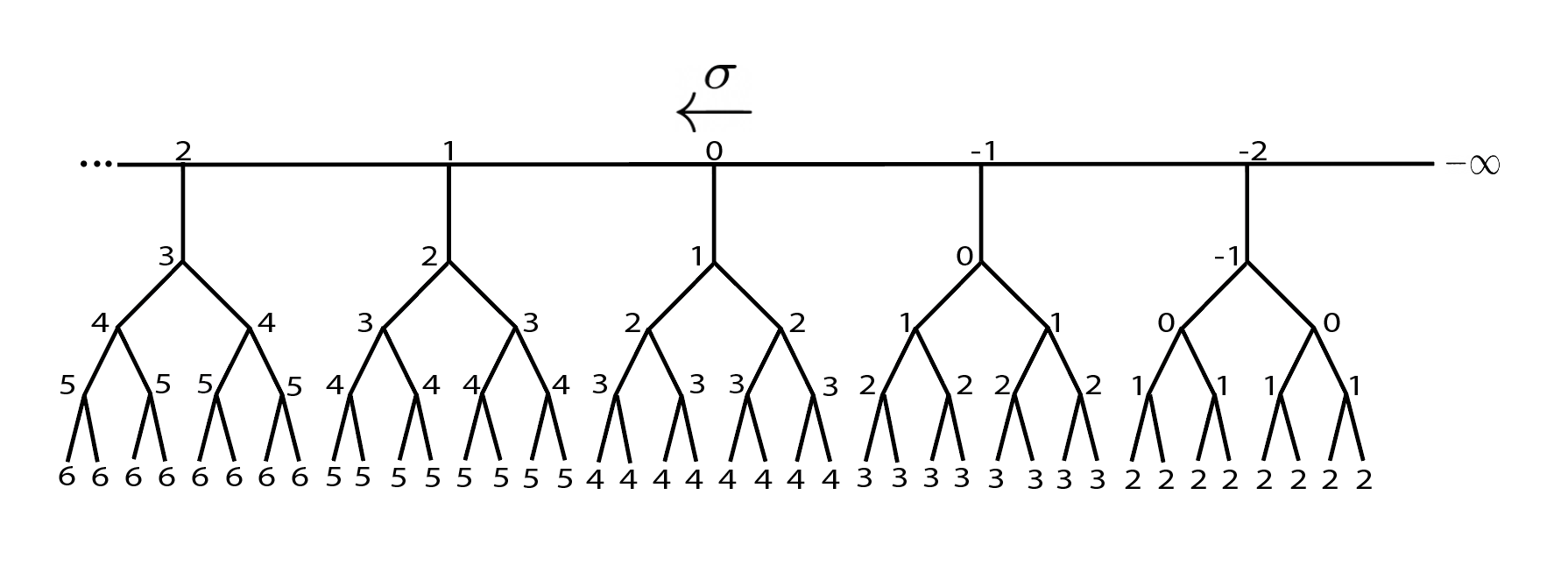}
    \caption{The action of $\phi_2$ on $T_2$}
    \label{fig:action}
\end{figure} 

Define the isometry $(x, y) \mapsto (\phi(x), \phi^{-1}(y))$ of $T_n\times T_n$. This shifts one of the trees up a level, so a vertex of $T_n$ with height $i$ will now have height $i+1$, while a vertex of the other copy of $T_n$ with height $-i$ will now have height $-i - 1$, so that the height still sums to 0. Finally, there exists an isometry $\psi$ defined by $\psi(x, y) = (y, x)$, it swaps the two trees.  
\par A result by Bartholdi, Neuhauser, and Woess in \cite{BNW} shows that $\Isom(\DL(n, n)$ contains the groups generated by $U, \phi$, and $\psi$, and nothing else.
\begin{prop}
For $n \geq 2$, \[
\Isom(\DL(n, n)) = (U \rtimes_\phi \Z) \rtimes_\psi \Z/2\Z). 
\]
\end{prop}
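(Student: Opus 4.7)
The plan is to show that every isometry $g$ of $\DL(n,n)$ decomposes as an element of $U$, a power of $\phi$, and possibly the swap $\psi$, by successive reduction until we land inside $U$.

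First, observe that the pair of preferred ends $-\infty$, one in each tree factor, is geometrically canonical in $\DL(n,n)$: they are the two ends with respect to which the defining Busemann functions are based, and this characterization is recoverable intrinsically from the level-set structure of the graph (the ``downward'' direction in each factor). So any isometry $g$ must permute this distinguished pair of ends, and after possibly replacing $g$ by $\psi \circ g$ we may assume $g$ fixes each preferred end individually.

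Second, I would argue that such an end-fixing $g$ splits as $g(x,y)=(f(x),h(y))$ for some tree isometries $f,h$ of $T_n$. The vertical geodesic rays emanating from each preferred end foliate $\DL(n,n)$ into sheets, and together with the height function these sheets recover the two tree factors intrinsically; an isometry fixing both preferred ends must preserve each foliation and therefore descends to a well-defined isometry in each coordinate. Then $f$ and $h$ each fix $-\infty$ in their respective trees, and they must shift heights by integers $k$ and $-k$ respectively, since the constraint $b(x)+b(y)=0$ is preserved. Composing with $\phi^{-k}$ yields an isometry whose tree components are level-preserving and $-\infty$-fixing, so by the preceding lemma each component lies in $U_0$, and hence $\phi^{-k} \circ g \in U$. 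Undoing all reductions, the original $g$ has the form $u\phi^k$ or $u\phi^k\psi$ with $u \in U$ and $k \in \Z$.

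The semidirect product structure is then routine to verify: $\phi$ normalizes $U$ because conjugating a rooted tree automorphism based at $v$ by $\phi$ produces one based at $\phi(v)$, and $\psi$ normalizes $U \rtimes_\phi \Z$ because $\psi \phi \psi = \phi^{-1}$ and $\psi$ interchanges the two $U_0$ factors of $U$. The main obstacle is Step 2 above: rigorously establishing that the product decomposition of $\DL(n,n)$ is forced by the preferred-end structure. The cleanest approach is to characterize each tree factor intrinsically as a quotient of $\DL(n,n)$ by an equivalence relation expressible purely in terms of the metric and the preferred ends (for instance, declaring $(x,y)\sim(x,y')$ iff the two vertices share an infinite descending ray toward $-\infty$ in the first coordinate), and then argue that any isometry fixing the preferred ends respects this relation and hence descends to each factor.
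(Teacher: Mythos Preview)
The paper does not supply its own proof of this proposition; it simply attributes the result to Bartholdi, Neuhauser, and Woess \cite{BNW} and states it without argument. So there is no in-paper proof to compare against.

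Your sketch follows the expected shape of the standard argument and is broadly sound, but one step deserves tightening. In Step~1 you claim the pair of preferred ends is ``recoverable intrinsically from the level-set structure of the graph''; however, the level sets were themselves defined via Busemann functions based at those very ends, so as stated this is circular. What is actually needed is an \emph{intrinsic} metric characterization of the height function $h(x,y)=b(x)$ on $\DL(n,n)$, determined up to sign and additive constant from the graph alone (e.g.\ via the structure of cliques or the two ``directions'' in the link of each vertex). Once that is in hand, you obtain a canonical homomorphism $\Isom(\DL(n,n))\to\Isom(\Z)\cong D_\infty$, and your reduction by $\psi$ and then by a power of $\phi$ goes through exactly as you describe. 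You are right to flag Step~2 (splitting an end-fixing isometry as a product of tree isometries) as the technical core; your proposed equivalence-relation approach---collapsing along descending rays toward one preferred end to recover each tree factor as a quotient---is essentially how the literature handles it.
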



Next we take a brief look at the actions used to define the isometry group. Note that an element of the group comes in the form $(((u_0, u_1), a), \epsilon)$ where $u_0, u_1$ are level set preserving and $-\infty$ fixing isometries of $T_n$, $a$ is an integer and $\epsilon \in \Z/2\Z$.  

\par The action defined by $\phi$ is $\phi \colon \Z \to \Aut(U)$ given by $\phi(a)(u_0, u_1) = \phi_a(u_0, u_1) = (\phi^a u_0\phi^{-a}, \phi^{-a}u_1 \phi^a)$. We also have an isometry where we flip the two trees, $(x, y) \mapsto (y, x)$. Call this flip $\psi$. This action is given by $\psi \colon \Z /2\Z \to \Aut(U \rtimes_{\phi} \Z)$ where $\psi(\epsilon)(u_0, u_1) = \psi_{\epsilon}((u_0, u_1), a) = ((u_{0 + \epsilon}, u_{1 + \epsilon}), (-1)^{\epsilon}a)$ for $\epsilon \in \Z/2\Z$. Where $u_{1 + \epsilon}$ is taken to be modulo 2, this way if $\epsilon = 1$, then we are flipping the two isometries. 

\vspace{0.5cm} Looking at the multiplication in the group $(U \rtimes_{\phi} \Z) \rtimes_{\psi} \Z/2\Z$, we have \begin{align*}
    (((u_0, u_1), a), \epsilon) \cdot (((v_0, v_1), b), \delta) =&\; (((u_0, u_1), a)\cdot \psi_{\epsilon}((v_0, v_1), b), \epsilon + \delta)\\
    =&\; (((u_0, u_1), a)\cdot ((v_{0 + \epsilon}, v_{1 + \epsilon}), (-1)^{\epsilon}b), \epsilon + \delta)\\
    =&\; (((u_0, u_1)\cdot \phi_a(v_0, v_1)), a + (-1)^{\epsilon}b), \epsilon + \delta)\\
    =&\; ((u_0\phi^av_{0 + \epsilon}\phi^{-a}, u_1\phi^av_{1 + \epsilon}\phi^{-a}), a + (-1)^{\epsilon} \cdot b), \epsilon + \delta).
\end{align*}


Geometrically, an element $(((u_0, u_1), a), \epsilon) \in (U \rtimes_{\phi} \Z) \rtimes_{\psi} \Z/2\Z$ contains $u_0, u_1$, the two level preserving isometries to perform on the two trees that make up the Diestel-Leader graph, and an integer, $a$, which tells us how many times we should perform $\phi$ on the first tree and how many times we should perform $\phi^{-1}$ on the second tree, and finally $\epsilon$ tells us whether or not we should flip the two trees. When we multiply elements and get the conjugation action on the isometries $(u_0, u_1)$ this moves the isometries up or down the number of times we perform $\phi$.

\begin{definition}
Let $\Isom(\DL(m, n))$ be the isometry group of $\DL(m ,n)$. An isometry $f \in \Isom(\DL(n, n))$ is \emph{positive} if it is the restriction to $\DL(n, n)$ of some product isometry $(\alpha, \beta) \in \Aut(T_n \times T_n)$ of $T_n \times T_n$. A non positive isometry is the composition of a positive isometry and a flip $(x, y) \mapsto (y, x)$. Note that this can only happen when $m =n$, otherwise we could not perform a flip since the two trees making up the Diestel-Leader graph will have different valences. 
\end{definition}

We let $G = \Isom^+(\DL(n, n)) = U \rtimes_\phi \Z$ be the index two subgroup of $\Isom(\DL(n, n))$. Let $\pi \colon G \to \Z$ be the projection onto $\Z$ so that $U = \ker \pi$. 

The following is corollary 2.3 from \cite{CWL}.
\begin{lemma}
If $\Gamma \leq \Isom(DL(n, n))$ is a closed cocompact subgroup,then there is an element $t \in \Gamma$ such that $\Gamma = (\Gamma \cap U) \rtimes \left< t\right>.$ 
\end{lemma}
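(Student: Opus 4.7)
The plan is to pass to the quotient of $\Gamma$ by $\Gamma\cap U$, leverage the fact that the ambient quotient $\Isom(\DL(n,n))/U \cong D_\infty$ is discrete, and use cocompactness of $\Gamma$ to produce a lift $t$ of a suitable generator.

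First, I would observe that $U$ is open and normal in $\Isom(\DL(n,n))$, so $\Gamma\cap U$ is open and normal in $\Gamma$ and the quotient $\bar\Gamma := \Gamma/(\Gamma\cap U)$ injects as a discrete subgroup of $\Isom(\DL(n,n))/U \cong \Z\rtimes_\psi \Z/2\Z \cong D_\infty$. Because $\Gamma$ is cocompact in $\Isom(\DL(n,n))$ and $U$ is closed, $\bar\Gamma$ is cocompact in the discrete group $D_\infty$, and therefore of finite index. In particular $\bar\Gamma$ is infinite, and its intersection with the translation subgroup $\Z \subset D_\infty$ is a nontrivial subgroup of the form $k\Z$ for some $k \geq 1$.

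Next, I would construct $t$. Set $\Gamma^+ := \Gamma\cap G$ where $G = \Isom^+(\DL(n,n)) = U\rtimes_\phi \Z$; the restriction $\pi\colon \Gamma^+ \to \Z$ has kernel $\Gamma\cap U$ and image $k\Z$. Choose any $t \in \Gamma^+$ with $\pi(t) = k$. Then $\pi$ restricted to $\langle t\rangle$ is injective, since $\pi(t^m)=mk\neq 0$ for $m\neq 0$, which forces $\langle t\rangle \cap (\Gamma\cap U) = \{e\}$. Moreover for any $g \in \Gamma^+$, writing $\pi(g) = mk$, the element $gt^{-m}$ lies in $\Gamma\cap U$, so $\Gamma^+ = (\Gamma\cap U)\cdot \langle t\rangle$. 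Normality of $\Gamma\cap U$ in $\Gamma$ is inherited from the normality of $U$ in $\Isom(\DL(n,n))$, giving the semidirect decomposition whenever $\Gamma=\Gamma^+$.

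The main obstacle is the case where $\Gamma \not\subset G$, so that $\bar\Gamma$ may itself be isomorphic to $D_\infty$ rather than cyclic; here $\langle t\rangle$ alone cannot surject onto $\bar\Gamma$, and the element $t$ in the statement must be chosen so that $\langle t\rangle$ picks up the needed reflection component in addition to the translation. Making this precise—either by enlarging the complement or by first reducing to $\Gamma^+$ and then recovering the full $\Gamma$—is the delicate step; once the lift $t$ has been correctly identified, the verification that $(\Gamma\cap U)\cap\langle t\rangle = \{e\}$ and $(\Gamma\cap U)\langle t\rangle=\Gamma$ is routine, as is the normality of $\Gamma\cap U$.
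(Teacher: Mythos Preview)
The paper does not give its own proof of this lemma; it is simply quoted as Corollary~2.3 from \cite{CWL}. Your argument for the case $\Gamma\subset G=\Isom^+(\DL(n,n))$ is the standard one and is correct: $U$ is open normal, so $\Gamma\cap U$ is open normal in $\Gamma$; cocompactness forces the image in $G/U\cong\Z$ to be a nontrivial subgroup $k\Z$; any lift of a generator gives the cyclic complement.

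The ``delicate step'' you flag for $\Gamma\not\subset G$, however, is not merely delicate---it is impossible. If $\Gamma$ meets the non-positive coset, then $\bar\Gamma=\Gamma/(\Gamma\cap U)$ is a finite-index subgroup of $D_\infty$ containing a reflection, hence is itself isomorphic to $D_\infty$. Since $D_\infty$ is not cyclic, no single element $t$ can generate a complement to $\Gamma\cap U$, and the identity $\Gamma=(\Gamma\cap U)\rtimes\langle t\rangle$ fails outright. In other words, the lemma as literally printed (with $\Isom$ rather than $\Isom^+$) is false in that case; the intended hypothesis---and the one in the cited source, cf.\ the hypothesis of Theorem~\ref{main-thm-cwl}(1)---is $\Gamma\le\Isom^+(\DL(n,n))$. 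The paper itself treats the complementary case by a separate lemma yielding $\Gamma=((\Gamma\cap U)\rtimes\langle t\rangle)\rtimes_\psi\Z/2\Z$. So rather than searching for a clever $t$, you should restrict the statement to $\Gamma\subset G$, where your proof already suffices.
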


Finally we end this statement with the main structure theorem from \cite{CWL}.

\begin{theorem}\label{main-thm-cwl}
Suppose $n \geq 2$. 
\begin{enumerate}
\item Let $\Gamma$ be a closed, cocompact subgroup of $\Isom^+(\DL(n, n))$. Then $\Gamma$ has a unique open normal subgroup $H$ such that $\Gamma /H$ is infinite cyclic. Moreover, if $t$ is any element of $\Gamma$ mapping to a generator of $\Gamma/H$, then $H$ has two open subgroups $L, L'$ such that:
\begin{itemize}
\item $tLt^{-1}$ and $t^{-1}L't$ are subgroups of index $n$ in $L$ and $L'$ respectively.
\item $\bigcup_{k \in \Z} t^{-k}Kt^k = \bigcup_{k \in \Z}t^kL't^{-k} = H$, the unions are increasing.
\item $L \cap L'$ is a vertex stabilizer and thus is compact.
\item $LL' = H$.
\end{itemize} Moreover, $\Gamma$ has no nontriial compact normal subgroup and $\Gamma$ is discrete. If the action of $\Gamma$ is simply transitive, then $L \cap L' = \set{1}$ and $LL' = H$. 
\item Conversely, let $\Gamma$ be a locally compact group, with a semidirect product decomposition $\Gamma = H \rtimes \left< t \right>$, with $H$ noncompact. Assume that $H$ has open subgroups $L, L'$ such that: 
\begin{itemize}
\item $tLt^{-1}$ and $t^{-1}Lt$ are finite index subgroups, of index $n$, in $L$ and $L'$ respectively.
\item $\bigcup_{k \in \Z} t^{-k}Lt^k = \bigcup_{k \in \Z} t^kL't^{-k} = H$ (the unions are increasing).
\item $L \cap L'$ is compact.
\item the double coset space $L\backslash H/L'$ is finite of cardinality $d$.
\end{itemize}
Then $H$ is locally elliptic (i.e., every compact subset of $H$ is containd in a compact subgroup), and $\Gamma$ has a proper, transitive action on $\DL(m, n)$, for which $L \cap L'$ is a vertex stabilizer, and whose kernel is $\bigcap_{k \in \Z} t^{-k}(L \cap L')t^k$. Moreover, if $\Gamma$ is discrete, then it is finitely generated. 
\end{enumerate}
\end{theorem}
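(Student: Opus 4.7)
The plan is to prove both directions of the structure theorem using the semidirect product decomposition $\Isom^+(\DL(n,n)) = U \rtimes_\phi \Z$ established above and the geometry of the Busemann level sets. The central tool is the projection $\pi \colon \Isom^+(\DL(n,n)) \to \Z$ with open kernel $U$, which restricts to any closed subgroup.

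For Part 1, I would first identify $H$. Since $\Gamma$ is closed and cocompact in $\Isom^+(\DL(n,n))$, the image $\pi(\Gamma)$ is a nontrivial closed subgroup of $\Z$, hence infinite cyclic, and $H := \Gamma \cap U = \ker(\pi|_\Gamma)$ is the desired open normal subgroup with $\Gamma / H \cong \Z$. Uniqueness of $H$ follows because any other open normal subgroup with infinite cyclic quotient must agree with $\ker(\pi|_\Gamma)$ by rigidity of the quotient map. Next I would construct $L$ and $L'$ geometrically: fixing a base vertex $v_0$ on the main line of the first tree, let $L$ be the elements of $H$ acting trivially on the first tree at all levels above $b(v_0)$, and $L'$ the symmetric subgroup for the second tree. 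Because $t$ corresponds to the shift by $\phi \times \phi^{-1}$, conjugation by $t$ embeds $L$ into itself with index $n$ (reflecting the $n$-fold branching that appears at each new level), and the nested unions $\bigcup t^{-k} L t^k$ and $\bigcup t^k L' t^{-k}$ exhaust $H$ by cocompactness. The intersection $L \cap L'$ stabilizes a single vertex of $\DL(n,n)$ and hence is compact, while $LL' = H$ follows from the factor decomposition $U = U_0 \times U_0$ applied level by level.

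For Part 2, given the abstract data $(H, t, L, L')$ I would reconstruct $\DL(n,n)$ synthetically. The chain of cosets of $L$ in $H$ under iterated conjugation by $t$ assembles into an $n$-regular tree with a distinguished end playing the role of $-\infty$, and the analogous construction for $L'$ yields the second tree; the two are coupled by the diagonal $\langle t \rangle$-action so that the Busemann functions sum to zero, recovering $\DL(n,n)$. The vertex set identifies with $H/(L \cap L')$, on which $\Gamma = H \rtimes \langle t \rangle$ acts by left translation, properly because $L \cap L'$ is compact, transitively by construction, and with kernel $\bigcap_k t^{-k}(L \cap L') t^k$. The finiteness of the double coset space $L\backslash H/L'$ translates directly into the cocompactness of the action.

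The hard part will be verifying in Part 1 that the product decomposition $LL' = H$ holds and that all four bullet properties align for a single coherent choice of base vertex and generator $t$; this requires tracing how cocompactness of $\Gamma$ propagates the local product structure of $U_0 \times U_0$ through every level of the Diestel-Leader graph. A secondary subtlety is establishing discreteness of $\Gamma$, which is not automatic from cocompactness of a subgroup of a locally compact but non-discrete isometry group, and requires showing that $L \cap L'$ intersects $\Gamma$ trivially in the simply transitive case and is otherwise a finite group.
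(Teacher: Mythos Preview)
This theorem is not proved in the present paper. It is quoted verbatim as ``the main structure theorem from \cite{CWL}'' at the end of Section~2 and is used as background; the paper's own contribution (Section~3) is the splitting of $1 \to H \to \Gamma \to D_\infty \to 1$, which \emph{invokes} this result but does not supply an argument for it. Consequently there is no proof here against which to compare your proposal, and you should consult \cite{CWL} directly for the original argument.

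That said, a brief comment on your sketch: the overall architecture is the right one and matches the spirit of the argument in \cite{CWL}, but several of your steps are imprecise in ways that would matter in a full proof. In Part~1, your description of $L$ as ``elements of $H$ acting trivially on the first tree at all levels above $b(v_0)$'' is not the correct subgroup; $L$ should be the stabilizer in $H$ of a half-tree (equivalently, of a horoball in one factor), not the pointwise fixer of levels. Your uniqueness argument for $H$ (``rigidity of the quotient map'') is a placeholder rather than an argument; one actually uses that $U$ is locally elliptic so any homomorphism $\Gamma \to \Z$ must kill $\Gamma \cap U$. Your justification that $LL' = H$ via the product $U = U_0 \times U_0$ does not transfer directly to $H = \Gamma \cap U$, since $H$ need not split as a product; this is exactly the step you flag as hard, and it genuinely requires cocompactness plus the transitivity of $U_0$ on each horosphere. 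Finally, your remark that discreteness ``requires showing that $L \cap L'$ intersects $\Gamma$ trivially'' is backwards: $L \cap L' \subset \Gamma$ by construction, and the statement in the theorem that $\Gamma$ is discrete appears to be a hypothesis-level slip in the paper's transcription (cocompact closed subgroups of $\Isom^+(\DL(n,n))$ need not be discrete).
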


\begin{remark}
Given a cross-wired lampligher $\Gamma$ contained in $\Isom(\DL(n, n))$, and not contained in $\Isom^+(\DL(n, n))$, we can take the index two subgroup of $\Gamma$, call it $\Gamma'$, that is contained in $\Isom^+(\DL(n, n))$, and by Theorem \ref{main-thm-cwl}, there is a unique subgroup $H$ such that $\Gamma'/H \cong \Z$. Thus $\Gamma/H \cong D_{\infty}$. 
\end{remark}


\section{Proof of the Main Theorem}

\vspace{0.5cm}
Now, we look at the action in the group $U \rtimes_{\phi} (\Z \rtimes_{\psi} \Z/2/\Z)$. We first see that the action of $\Z/2\Z$ on $\Z$ is defined in the only possible way, by sending $\epsilon$ to the automorphism $(-1)^{\epsilon}\cdot n$ for $n \in \Z$. 

\par We define the action of $U \rtimes_{\phi} ( \Z \rtimes_{\psi} \Z/2\Z)$ by the homomorphism, $\phi \colon \Z \rtimes_{\phi} \Z/2\Z \to \Aut(U)$ given by $\phi(a, \epsilon). =\phi_{(a, \epsilon)}(u_0. u_1) = (\phi^a u_{0 + \epsilon}\phi^{-a}, \phi^{-a}u_{1 + \epsilon} \phi^a)$ for $(a, \epsilon) \in \Z \rtimes_{\phi} \Z/2\Z$. We first check that it is a homomorphism: \begin{align*}
    \phi(a, \epsilon)\phi(b, \delta)(u_0, u_1) =&\; \phi(a, \epsilon)(\phi^b u_{0 + \delta} \phi^{-b}, \phi^{-b}u_{1 + \delta}\phi^b)\\
    =&\; (\phi^a\phi^{(-1)^{\epsilon}b}u_{0 + \epsilon + \delta}\phi^{(-1)^{\epsilon + 1}b}\phi^{-a}, \phi^{-a}\phi^{(-1)^{\epsilon + 1}b} u_{1 + \epsilon + \delta} \phi^{(-1)^{\epsilon}b}\phi^a)\\ 
    =&\; (\phi^{1 + (-1)^{\epsilon}b}u_{0+ \epsilon + \delta} \phi^{-a -(-1)^{\epsilon}b}, \phi^{-a -(-1)^{\epsilon}b}u_{1 + \epsilon + \delta}\phi^{1 + (-1)^{\epsilon}b}) \\
    =&\;  \phi(a + (-1)^{\epsilon}b, \epsilon + \delta) = \phi((a , \epsilon)(b, \delta))
\end{align*}

Next, we check that it is an automorphism. Let $(u_0, u_1), (v_0, v_1) \in U$ and $(a, \epsilon) \in \Z \rtimes_{\phi} \Z/2\Z$. Then \begin{align*}
    \phi_{(a, \epsilon)}(u_0, u_1)\phi_{(a, \epsilon)}(v_0, v_1) =&\; (\phi^au_{0 + \epsilon}\phi^{-a}, \phi^{-a}u_{1 + \epsilon}\phi^a)(\phi^av_{0 + \epsilon}\phi^{-a}, \phi^{-a}v_{1 + \epsilon}\phi^a)\\
    =&\; (\phi^au_{0 + \epsilon}v_{0 + \epsilon}\phi^{-a}, \phi^{-a}u_{1 + \epsilon}v_{1 + \epsilon}\phi^a)
\end{align*} and \[
\phi_{(a, \epsilon)}(u_0v_0, u_1v_1) = (\phi^au_{0 + \epsilon}v_{0 + \epsilon}\phi^{-a}, \phi^{-a}u_{1 + \epsilon}v_{1 + \epsilon}\phi^a)
\] It is also clear that this map is bijective,and thus an isomorphism.

\vspace{0.5cm}
Then looking at the multiplication in the group we have \[
((u_0, u_1), (a, \epsilon)) \cdot ((v_0, v_1), (b, \delta)) = ((u_0v_{0 + \epsilon}\phi^a, u_1v_{1 + \epsilon}\phi^{-a}), (a + (-1)^{\epsilon} \cdot b, \epsilon + \delta)).
\] Since the action gives us the same triple of elements after multiplication, we can conclude they are isomorphic by sending $(((u_0, u_1), a), \epsilon)$ and $((u_0, u_1), (a, \epsilon))$ to $((u_0, u_1), a, \epsilon)$. 

Combining all of the above we get the following lemma.

\begin{lemma}\label{main-lem}
$(U \rtimes_{\phi} \Z) \rtimes_{\psi} \Z/2\Z \cong U \rtimes_{\phi} (\Z \rtimes_{\psi} \Z/2\Z)$. 
\end{lemma}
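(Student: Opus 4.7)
The plan is to construct an explicit bijection between the underlying sets of the two groups and verify that it preserves multiplication. Both groups share the underlying set $U \times \Z \times \Z/2\Z$, so the natural candidate is the map
\[
\Phi \colon (U \rtimes_\phi \Z) \rtimes_\psi \Z/2\Z \;\longrightarrow\; U \rtimes_\phi (\Z \rtimes_\psi \Z/2\Z)
\]
defined by $\Phi(((u_0,u_1),a),\epsilon) = ((u_0,u_1),(a,\epsilon))$. Bijectivity is then immediate, and everything reduces to verifying compatibility of the two multiplications.

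Before doing that, I need to specify the actions appearing on the right-hand side. Since the only nontrivial automorphism of $\Z$ is negation, the action of $\Z/2\Z$ on $\Z$ is forced to be $\epsilon \cdot a = (-1)^{\epsilon} a$, making $\Z \rtimes_\psi \Z/2\Z \cong D_\infty$. Next I would define a combined action $\tilde\phi \colon \Z \rtimes_\psi \Z/2\Z \to \Aut(U)$ that restricts to the original $\phi$ on $\Z$ and to the original $\psi$ on $\Z/2\Z$; the natural choice is
\[
\tilde\phi(a,\epsilon)(u_0,u_1) = (\phi^a u_{0+\epsilon} \phi^{-a},\; \phi^{-a} u_{1+\epsilon} \phi^a),
\]
with indices read modulo $2$. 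This essentially composes the coordinate swap coming from $\epsilon$ with the conjugation coming from $a$.

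The main obstacle is verifying that $\tilde\phi$ is a well-defined group homomorphism into $\Aut(U)$. This breaks into two checks: that each $\tilde\phi(a,\epsilon)$ is an automorphism of $U$, and that $\tilde\phi((a,\epsilon)(b,\delta)) = \tilde\phi(a,\epsilon)\circ\tilde\phi(b,\delta)$. The former is routine since conjugation by $\phi^{\pm a}$ and swapping the two factors are both automorphisms. The latter is the real compatibility calculation: when composing $\tilde\phi(a,\epsilon)$ after $\tilde\phi(b,\delta)$, the outer $\epsilon$-swap converts the inner exponent $b$ into $(-1)^\epsilon b$, which must match the exponent $a + (-1)^{\epsilon} b$ coming from the product $(a,\epsilon)(b,\delta)$ in $\Z \rtimes_\psi \Z/2\Z$. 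Tracking the indices and signs carefully is the delicate bookkeeping step.

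Once $\tilde\phi$ is in hand, I would expand the two multiplications directly. On the left, the product is computed by first applying $\psi_\epsilon$ to $((v_0,v_1),b)$ to get $((v_{0+\epsilon},v_{1+\epsilon}),(-1)^\epsilon b)$, then multiplying inside $U \rtimes_\phi \Z$. On the right, one applies $\tilde\phi_{(a,\epsilon)}$ to $(v_0,v_1)$ and multiplies in $\Z \rtimes_\psi \Z/2\Z$. Both sides should yield the same triple
\[
\bigl((u_0 \phi^a v_{0+\epsilon}\phi^{-a},\; u_1 \phi^{-a} v_{1+\epsilon}\phi^{a}),\; a + (-1)^{\epsilon} b,\; \epsilon + \delta\bigr),
\]
confirming that $\Phi$ is a homomorphism and hence an isomorphism. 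The content of the lemma is really the assertion that the extended action $\tilde\phi$ exists and encodes the information of both $\phi$ and $\psi$ simultaneously, so the bulk of the argument is the compatibility verification in paragraph three.
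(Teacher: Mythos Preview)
Your proposal is correct and follows essentially the same route as the paper: define the combined action $\tilde\phi_{(a,\epsilon)}(u_0,u_1)=(\phi^a u_{0+\epsilon}\phi^{-a},\,\phi^{-a}u_{1+\epsilon}\phi^a)$, verify it is a homomorphism into $\Aut(U)$, and then check that the identity map on triples $(((u_0,u_1),a),\epsilon)\mapsto((u_0,u_1),(a,\epsilon))$ intertwines the two multiplications. The paper's argument is exactly this computation, so there is nothing to add.
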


Now we turn to proving the main theorem we are interested in by first stating a lemma that is an extension of corollary 2.3 from \cite{CWL}.

\begin{lemma}
Let $\Gamma \leq \Isom(\DL(n, n))$ be a closed, cocompact subgroup that is not contained in $\Isom^+(\DL(n, n))$. Then there is an element $t \in \Gamma$ such that $\Gamma = ((\Gamma \cap U) \rtimes \langle t \rangle) \rtimes_{\psi} \Z/2\Z$. 
\end{lemma}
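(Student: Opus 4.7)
The plan is to split the $\Z/2\Z$-factor off $\Gamma$ and reduce the $\Z$-factor to the earlier Corollary 2.3 of \cite{CWL}.

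Let $\Gamma^{+} := \Gamma \cap \Isom^{+}(\DL(n,n))$. Because $\Isom^{+}(\DL(n,n))$ has index two in $\Isom(\DL(n,n))$ and $\Gamma \not\subseteq \Isom^{+}$, the subgroup $\Gamma^{+}$ has index two in $\Gamma$ and is closed and cocompact in $\Isom^{+}(\DL(n,n))$. Corollary 2.3 of \cite{CWL} applied to $\Gamma^{+}$ then produces $t \in \Gamma^{+}$ with $\Gamma^{+} = (\Gamma^{+} \cap U) \rtimes \langle t \rangle$; and since $U \subseteq \Isom^{+}(\DL(n,n))$, we have $\Gamma^{+} \cap U = \Gamma \cap U$. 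This gives half of the desired structure for free.

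The heart of the argument is to produce an involution $s \in \Gamma \setminus \Gamma^{+}$. Pick any $s_{0} \in \Gamma \setminus \Gamma^{+}$; the $\psi$-component of $s_{0}$ equals $1$, so using the multiplication rule recorded in the preliminaries one computes that $s_{0}^{2}$ has $\Z$-component $a + (-1)a = 0$ and $\Z/2\Z$-component $1 + 1 = 0$. Hence $s_{0}^{2} \in U$, and combined with $s_{0}^{2} \in \Gamma$ this gives $s_{0}^{2} \in \Gamma \cap U$. Setting $s := n s_{0}$ for $n \in \Gamma \cap U$ yet to be chosen, the condition $s^{2} = 1$ rearranges to the equation $n \cdot \sigma(n) = (s_{0}^{2})^{-1}$ in $\Gamma \cap U$, where $\sigma$ denotes the involutive automorphism of $\Gamma \cap U$ given by conjugation by $s_{0}$ (essentially the swap of the two tree coordinates, possibly twisted by $\phi^{a}$). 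A direct expansion of the formula for $s_{0}^{2}$ shows that it already has the symmetric form $w \cdot \sigma(w)$ for some $w$ in the ambient $U$; to solve the equation inside $\Gamma \cap U$ I would use the description of $\Gamma \cap U$ as a product $LL'$ of open subgroups coming from Theorem \ref{main-thm-cwl}, together with the fact that $\sigma$ interchanges $L$ and $L'$ (since it swaps the two tree factors), in order to select $n$ from $\Gamma \cap U$ itself.

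Once the involution $s$ is constructed, the rest is formal: $\Gamma = \Gamma^{+} \sqcup s\Gamma^{+}$, conjugation by $s$ preserves $\Gamma^{+}$ and restricts on $\Gamma \cap U$ to the ambient $\psi$-action (because that is what conjugation by any element of $\psi$-component $1$ does in $\Isom(\DL(n,n))$), so
\[
\Gamma \;=\; \Gamma^{+} \rtimes \langle s \rangle \;=\; \bigl((\Gamma \cap U) \rtimes \langle t \rangle\bigr) \rtimes_{\psi} \Z/2\Z.
\]
The main obstacle is clearly the production of $s$: the calculation above shows that the required symmetric square root of $s_{0}^{2}$ exists in the ambient $U$, but upgrading the argument so that the corrective element $n$ actually lies in $\Gamma \cap U$ is where the structural information about closed cocompact subgroups, in particular the $LL'$-decomposition of \cite{CWL}, must be brought to bear.
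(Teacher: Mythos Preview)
Your reduction to $\Gamma^{+} = \Gamma \cap \Isom^{+}(\DL(n,n))$ and the invocation of Corollary~2.3 of \cite{CWL} match the paper's proof verbatim, including the identification $\Gamma^{+} \cap U = \Gamma \cap U$. The paper then simply asserts that $\Gamma^{+}$ having index two in $\Gamma$ gives $\Gamma = \Gamma^{+} \rtimes \Z/2\Z$, with no further argument; since an index-two subgroup need not admit a complement, you are right to treat the production of an involution in $\Gamma \setminus \Gamma^{+}$ as the substantive step, and your computation that any $s_{0}$ in that coset already satisfies $s_{0}^{2} \in \Gamma \cap U$ is a genuine advance over what the paper writes down.

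The gap you flag, however, is real and is not obviously closed by the $LL'$-decomposition. To solve $n \cdot \sigma(n) = (s_{0}^{2})^{-1}$ inside $\Gamma \cap U$ you would need $\sigma$ to interchange $L$ and $L'$ on the nose, whereas conjugation by $s_{0}$ (which carries nontrivial $U$- and $\Z$-components) only does so up to an inner twist and up to commensuration; and even granting an exact swap, it is not clear why the particular element $(s_{0}^{2})^{-1}$ should factor as $\ell \cdot \sigma(\ell)$ with $\ell$ drawn from $\Gamma \cap U$ rather than merely from the ambient $U$. A related loose end is your final claim that conjugation by the eventual $s$ restricts to the ambient $\psi$-action: conjugation by a general element of $\psi$-component $1$ is $\psi$ followed by an inner automorphism of $U \rtimes_{\phi} \Z$, so the semidirect product you obtain is $\rtimes_{\psi}$ only after absorbing that inner twist into the choice of $t$. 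The paper's proof does not address either of these points.
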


\begin{proof}
First take $\Gamma' = \Gamma \cap \Isom^+(\DL(n, n))$, then by corollary 2.3 from \cite{CWL}, there is an element $t \in \Gamma'$ such that $\Gamma' = (\Gamma' \cap U) \rtimes \left< t \right>$. Finally, since $\Isom^+(\DL(n, n))$ is an index 2 subgroup in $\Isom(\DL(n, n))$, we see that $\Gamma'$ is an index 2 subgroup in $\Gamma$, and thus $\Gamma = \Gamma' \rtimes \Z/2\Z = (\Gamma' \cap U) \rtimes \left< t \right>$. But $\Gamma' \cap U = \Gamma \cap U$, giving us the result. 
\end{proof}

Following the proof of the theorem 1.1 \cite{CWL}, we let $H = \Gamma \cap U$. Then $\Gamma/H \cong D_{\infty}$. For clarity, we will write $\Gamma$ as $(H \rtimes \Z )\rtimes \Z/2\Z$. Now, we can rewrite our initial short exact sequence as \[
1 \to H \to (H \rtimes \Z )\rtimes \Z/2\Z \to \Z \rtimes \Z/2\Z \to 1.
\] However, by lemma \ref{main-lem} we get that $((\Gamma \cap U) \rtimes \Z )\rtimes \Z/2\Z \cong (\Gamma \cap U) \rtimes (\Z \rtimes \Z/2\Z)$. Lemma \ref{main-lem} holds for subgroups of $U$ since we can just map both $((\Gamma \cap U) \rtimes \Z )\rtimes \Z/2\Z$ and $(\Gamma \cap U) \rtimes (\Z \rtimes \Z/2\Z)$ to $(\Gamma \cap U) \rtimes_{\phi} \Z \rtimes_{\psi} \Z/2\Z$ the same way we did above.  So, replacing $H = \Gamma \cap U$ we get \[
1 \to H \to H \rtimes (\Z \rtimes \Z/2\Z) \to \Z \rtimes \Z/2\Z \to 1.
\] 
which by lemma \ref{split-lemma}, tells us that our exact sequence splits. Thus our extension of interest splits and we get $\Gamma \cong H \rtimes D_{\infty}$.

\section{Symmetry}

We make a few remarks about the symmetry question in \cite{CWL}. 

\begin{definition}
Let $\Gamma$ be a cross-wired lamplighter group. We say $\Gamma$ is symmetric if $\Gamma$ admits an automorphism $\alpha$ such that $\pi \circ \alpha = -\pi$ where $\pi \colon \Gamma = H \rtimes_\phi \Z \to \Z$ is the projection onto the second coordinate. We say that $\Gamma$ is \emph{virtually symmetric} if $\Gamma$ has a symmetric subgroup of finite index. 
\end{definition}

In \cite{CWL}, Cournulier, Fisher and Kashyap ask whether or not there exists a non virtually-symmetric cross-wired lamplighter. 

We provide an example showing that the cross-wired lamplighter group $\bbH(\F_q[X, X^{-1}]) \rtimes_\phi \Z$ is symmetric where $\bbH(\cdot)$ is the Heisenberg group of upper triangular matrices over a ring.

\begin{prop}
The group $\bbH(\F_q[X, X^{-1}]) \rtimes_{\phi} \Z$ is symmetric.
\end{prop}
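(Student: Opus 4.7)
The plan is to exhibit an explicit involutive automorphism $\alpha$ of $\Gamma = \bbH(\F_q[X, X^{-1}]) \rtimes_\phi \Z$ satisfying $\pi \circ \alpha = -\pi$. Recall that an element of $\Gamma$ has the form $(M, n)$ where $M$ is a unipotent upper-triangular matrix over $R := \F_q[X, X^{-1}]$, and $\phi$ acts on $\bbH(R)$ via multiplication by a power of $X$ on the entries of $R$.

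The central observation is that $R$ carries the involutive ring automorphism $\sigma \colon X \mapsto X^{-1}$. Applying $\sigma$ entrywise defines a group automorphism $\beta \colon \bbH(R) \to \bbH(R)$, because the Heisenberg multiplication law invokes only ring operations that $\sigma$ respects, and $\beta^2 = \mathrm{id}$. The crucial property of $\beta$ is that it intertwines $\phi$ with $\phi^{-1}$, i.e.\ $\beta \circ \phi = \phi^{-1} \circ \beta$; this reduces to the ring identity $\sigma(Xf) = X^{-1}\sigma(f)$, which is immediate from $\sigma$ being a ring homomorphism with $\sigma(X) = X^{-1}$.

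With $\beta$ in hand I would define $\alpha(M, n) := (\beta(M), -n)$. The intertwining condition is exactly what is needed to make $\alpha$ a homomorphism: applying the semidirect-product multiplication rule gives $\alpha((M,n)(M',n')) = (\beta(M)\,\beta(\phi^n(M')), -n-n')$, and rewriting $\beta \circ \phi^n = \phi^{-n} \circ \beta$ matches this with $\alpha(M,n)\alpha(M',n')$. Since both $\beta$ and $n \mapsto -n$ are involutions, $\alpha$ is an involution and hence an automorphism of $\Gamma$. By construction $\pi \circ \alpha = -\pi$, which is precisely the condition for $\Gamma$ to be symmetric.

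The main (and only minor) obstacle is verifying that the entrywise action of $\sigma$ genuinely respects the Heisenberg group law; this amounts to observing that the off-diagonal correction term appearing in Heisenberg multiplication (namely the product $ab'$ in the $(1,3)$ entry) is a purely ring-theoretic expression, so $\sigma$ distributes over it. Once this routine point is dispatched, the verification that $\alpha$ is a homomorphism with the required property is entirely mechanical.
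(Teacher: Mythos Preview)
Your proposal is correct and takes essentially the same approach as the paper: the paper's automorphism is exactly your $\alpha$ (apply the ring involution $X\mapsto X^{-1}$ entrywise and negate the $\Z$-component), and the paper verifies the homomorphism property by a direct matrix computation rather than by isolating the intertwining identity $\beta\circ\phi=\phi^{-1}\circ\beta$ as you do.
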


\begin{proof}
Let $\\sigma \colon \Gamma \to \Gamma$ be given by \[
\sigma\left(\begin{pmatrix}
1 & f(X, X^{-1}) & h(X, X^{-1}) \\
0 & 1 & g(X, X^{-1})\\
0 & 0 & 1
\end{pmatrix},a
\right) = \left(\begin{pmatrix}
1 & f(X^{-1}, X) & h(X^{-1}, X)\\
0 & 1 & g(X^{-1}, X)\\
0 & 0 & 1
\end{pmatrix}, -a\right).
\] So $\sigma$ restricts to an automorphism of $\F_q[X, X^{-1}]$ that sends $X$ to $X^{-1}$. It is clear the $\sigma$ is bijective. We check that it is a homomorphism: \begin{align*}
    &\sigma \left(\begin{pmatrix}
    1 & f(X, X^{-1}) & h(X, X^{-1}) \\
    0 & 1 & g(X, X^{-1})\\
    0 & 0 & 1
    \end{pmatrix}, a\right) \sigma\left(\begin{pmatrix}
    1 & i(X, X^{-1}) & k(X, X^{-1}) \\
    0 & 1 & j(X, X^{-1})\\
    0 & 0 & 1
    \end{pmatrix} ,b\right)\\ =& \; \left(\begin{pmatrix}
1 & f(X^{-1}, X) & h(X^{-1}, X)\\
0 & 1 & g(X^{-1}, X)\\
0 & 0 & 1
\end{pmatrix}, -a\right) \left(\begin{pmatrix}
1 & i(X^{-1}, X) & k(X^{-1}, X)\\
0 & 1 & j(X^{-1}, X)\\
0 & 0 & 1
\end{pmatrix}, -b\right)\\
=&\; \left(\begin{pmatrix}
1 & f(X^{-1} X) + X^{-a}i(X^{-1}, X) & X^{-2a}h(X^{-1}, X) + X^{-a}f(X^{-1}, X)j(X^{-1}, X) + h(X^{-1}, X)\\
0 & 1 & g(X^{-1}, X) + X^{-a}j(X^{-1}, X)\\
0 & 0 & 1
\end{pmatrix}, -a-b\right)
\end{align*} and \begin{align*}
    &\sigma\left(\left(\begin{pmatrix}
    1 & f(X, X^{-1}) & h(X, X^{-1}) \\
    0 & 1 & g(X, X^{-1})\\
    0 & 0 & 1
    \end{pmatrix}, a\right)\left( \begin{pmatrix}
    1 & i(X, X^{-1}) & k(X, X^{-1}) \\
    0 & 1 & j(X, X^{-1})\\
    0 & 0 & 1
    \end{pmatrix}, b\right)\right)\\
   =& \sigma \left(\begin{pmatrix}
    1 & f(X, X^{-1}) + X^ai(X, X^{-1}) & x^{2a}k(X, X^{-1}) + X^af(X, X^{-1})j(X, X^{-1}) + h(X, X^{-1})\\
    0 & 1 & g(X, X^{-1}) + X^aj(X, X^{-1})\\
    0 & 0 & 1
    \end{pmatrix}, a + b\right)\\
    =& \; \left(\begin{pmatrix}
    1 & f(X^{-1} X) + X^{-a}i(X^{-1}, X) & X^{-2a}h(X^{-1}, X) + X^{-a}f(X^{-1}, X)j(X^{-1}, X) + h(X^{-1}, X)\\
0 & 1 & g(X^{-1}, X) + X^{-a}j(X^{-1}, X)\\
0 & 0 & 1
    \end{pmatrix}\right)
\end{align*} Thus $\sigma$ is an automorphism and $\Gamma$ is symmetric. 
\end{proof}

We end by stating a few open questions about cross-wired lamplighters.
\begin{enumerate}
\item Is there a cross-wired lamplighter that is not virtually symmetrc?
\item Given a cross-wired lamplighter, is the pair $\set{L, L'}$ unique up to commensurability and automorphisms? Suppose that there are two embeddings of $\Gamma$ as a cross-wired lamplighter group $(L_1, L_1')$ and $(L_2, L_2')$. Is there an automorphism $\beta$ of $\Gamma$ such that $\beta(L_1)$ is commensurable to $L_1'$ and $\beta(L_2)$ is commensurable to $L_2'$. This is question 1.4(ii) from \cite{CWL}.
\item Is there a non-linear cross-wired lamplighter? All of the example of cross-wired lamplighters from \cite{CWL}, and \cite{Yang1} are linear, so finding a non-linear one or proving that they are all linear will give great insight into their structure. 
\end{enumerate}

\bibliography{references} 
\bibliographystyle{ieeetr}

\end{document}